\theoremstyle{theorem}
\newtheorem{thm}{Theorem}[section]
\newtheorem{lem}{Lemma}[section]
\newtheorem{conj}{Conjecture}[section]
\newtheorem{cor}{Corollary}[section]
\theoremstyle{definition}
\newtheorem{dfn}{Definition}[section]
\theoremstyle{remark}
\DeclareMathOperator{\td}{d\mspace{-2mu}}
\DeclareMathOperator{\arcsinh}{arcsinh}
\numberwithin{equation}{section}
\begin{document}

\title[Convexity of generalized sine and generalized hyperbolic sine]
{Geometric convexity of the generalized sine and the generalized hyperbolic sine}

\author[W.-D. Jiang]{Wei-Dong Jiang}
\address[Jiang]{Department of Information Engineering, Weihai Vocational University\\ Weihai City, Shandong Province, 264210, China}
\email{\href{mailto: Wei-Dong Jiang <jackjwd@163.com>}{jackjwd@163.com}}

\author[F. Qi]{Feng Qi}
\address[Qi]{Department of Mathematics, School of Science, Tianjin Polytechnic University\\ Tianjin City, 300387, China}
\email{\href{mailto: F. Qi <qifeng618@gmail.com>}{qifeng618@gmail.com}, \href{mailto: F. Qi <qifeng618@hotmail.com>}{qifeng618@hotmail.com}, \href{mailto: F. Qi <qifeng618@qq.com>}{qifeng618@qq.com}}
\urladdr{\url{http://qifeng618.wordpress.com}}

\subjclass[2010]{Primary 33B10; Secondary 26A51, 26D05, 33C20}

\keywords{generalized sine, generalized hyperbolic sine, geometric convexity, inequality, conjecture}

\begin{abstract}
In the paper, the authors prove that the generalized sine function $\sin_{p,q}(x)$ and the generalized hyperbolic sine function $\sinh_{p,q}(x)$ are geometrically concave and geometrically convex, respectively. Consequently, the authors verify a conjecture posed in the paper ``B. A. Bhayo and M. Vuorinen, \emph{On generalized trigonometric functions with two parameters}, J. Approx. Theory \textbf{164} (2012), no.~10, 1415\nobreakdash--1426; Available online at \url{http://dx.doi.org/10.1016/j.jat.2012.06.003}''.
\end{abstract}

\thanks{This work was supported in part by the Project of Shandong Province Higher Educational Science and Technology Program under grant No.~J11LA57}

\maketitle

\section{Introduction}
It is well known from calculus that
$$
\arcsin x=\int_0^x \frac{1}{(1-t^2)^{1/2}}\td t
$$
for $0\le x\le1$ and
$$
\frac{\pi}{2}=\arcsin1=\int_0^1 \frac{1}{(1-t^2)^{1/2}}\td t.
$$
We can define the sine function on $\bigl[0,\frac\pi2\bigr]$ as the inverse of the arcsine function and extend it to $(-\infty,\infty)$.
\par
Let $1<p<\infty$. the arcsine can be generalized as
$$
\arcsin_px=\int_0^x \frac{1}{(1-t^p)^{1/p}}\td t, \quad 0\le x\le1
$$
and
$$
\frac{\pi_p}{2}=\arcsin_p1=\int_0^1 \frac{1}{(1-t^p)^{1/p}}\td t.
$$
The inverse of the function $\arcsin_p$ on $\bigl[0,\frac{\pi_p}2\bigr]$ is called the generalized sine function and denoted by $\sin_p$.
By standard extending procedures as the sine function done, we may obtain a differentiable function on the whole real line $(-\infty,\infty)$, which coincides with the sine when $p=2$.
It is easy to see that the function $\sin_p$ is strictly increasing and concave on $\bigl[0,\frac{\pi_p}2\bigr]$. Similarly, we can define the generalized cosine, tangent, hyperbolic functions and their inverses.
\par
For $p,q>1$, let
\begin{equation*}
F_{p,q}(x)=\int_{0}^{x}(1-t^q)^{-1/p}\td t,\quad x\in[0,1]
\end{equation*}
and
\begin{equation*}
\frac{\pi_{p,q}}{2}=\int_{0}^{1}(1-t^q)^{-1/p}\td t.
\end{equation*}
Then $F_{p,q}:[0,1]\to \bigl[0,\frac{\pi_{p,q}}2\bigr]$ is an increasing homeomorphism. We denote this function $F_{p,q}$ by $\arcsin_{p,q}$. Thus, its inverse
\begin{equation*}
\sin_{p,q}=F_{p,q}^{-1}
\end{equation*}
is defined on the interval $\bigl[0,\frac{\pi_{p,q}}2\bigr]$. By a similar extension to the sine function, we can find a differentiable function $\sin_{p,q}$ defined on $\mathbb{R}$. We call $\sin_{p,q}$ the generalized $(p,q)$-sine function.

We can also define
$$
\arccos_{p,q}x=\arcsin_{p,q}\bigl[(1-x^p)^{1/q}\bigr],
$$
see~\cite{Jiang-Qi-conj2, Jiang-Qi-conj4}, and the inverse of the generalized $(p,q)$-hyperbolic sine
function
\begin{equation*}
\arcsinh_{p,q}(x)=\int_{0}^{x}(1+t^q)^{-1/p}\td t,\quad x\in(0,\infty).
\end{equation*}
Their inverse functions are
\begin{equation*}
\sin_{p,q}:\biggl(0,\frac{\pi_{p,q}}2\biggr)\to (0,1),\quad \cos_{p,q}:\biggl(0,\frac{\pi_{p,q}}2\biggr)\to (0,1),
\end{equation*}
and
$$
\sinh_{p,q}:\bigl(0,m^*_{p,q}\bigr)\to (0,\infty),
$$
where
$$
m^*_{p,q}=\int_{0}^{\infty}(1+t^q)^{-1/p}\td t.
$$
\par
When $p=q$, the $(p,q)$-functions $\sin_{p,q}$, $\cos_{p,q}$,
$\sinh_{p,q}$, $\arcsin_{p,q}$, $\arccos_{p,q}$, and $\arcsinh_{p,q}$
reduce to $p$-functions $\sin_{p}$, $\cos_{p}$, $\sinh_{p}$,
$\arcsin_{p}$, $\arccos_{p}$, and $\arcsinh_{p}$ respectively. See~\cite{Jiang-Qi-conj3, Jiang-Qi-conj5, Jiang-Qi-conj7}. In particular, when $p=q=2$, the $(p,q)$-functions
become our familiar trigonometric and hyperbolic functions.
\par
Recently, the generalized trigonometric and hyperbolic functions, including $(p,q)$-functions and $p$-functions, have been studied by many mathematicians from
different points of view. See~\cite{Jiang-Qi-conj1, Jiang-Qi-conj6, Jiang-Qi-conj8, Jiang-Qi-conj9}. In~\cite{Jiang-Qi-conj4}, the authors gave basic properties of the generalized $(p,q)$-trigonometric
functions. In~\cite{Jiang-Qi-conj5}, the authors generalized some classical
inequalities for trigonometric and hyperbolic functions, such as
Mitrinovi\'c-Adamovi\'c inequality, Lazarevi\'c's
inequality, Huygens-type inequalities, and Wilker-type inequalities.
\par
In~\cite{Jiang-Qi-conj2}, the authors found that the functions $\arcsin_{p,q}$ and
$\arcsinh_{p,q}$ can be expressed in terms of Gaussian
hypergeometric functions. Applying the vast available information
about hypergeometric functions, some remarkable properties and
inequalities for generalized trigonometric and hyperbolic functions
are obtained. Moreover, they raised the following conjecture.

\begin{conj}[{\cite[p.~1421, Conjecture~2.11]{Jiang-Qi-conj2}}]
If $p,q\in(1,\infty)$ and $r,s\in(0,1)$, then
\begin{equation}\label{conj-ineq-1}
\sin_{p,q}\sqrt{rs}\,\ge \sqrt{\sin_{p,q}r\sin_{p,q}s}\,
\end{equation}
and
\begin{equation}\label{conj-ineq-2}
\sinh_{p,q}\sqrt{rs}\,\le \sqrt{\sinh_{p,q}r\sinh_{p,q}s}.
\end{equation}
\end{conj}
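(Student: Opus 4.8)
The plan is to recognize that \eqref{conj-ineq-1} and \eqref{conj-ineq-2} say exactly that $\sin_{p,q}$ is geometrically concave and $\sinh_{p,q}$ is geometrically convex on intervals containing $(0,1)$. Recall that a positive function $f$ on a subinterval of $(0,\infty)$ is geometrically concave (respectively convex) precisely when $t\mapsto\log f(e^{t})$ is concave (convex), equivalently, for differentiable $f$, when $x\mapsto xf'(x)/f(x)$ is decreasing (increasing). Once these monotonicities are known, applying the resulting log-concavity (log-convexity) at the midpoint with $t_{1}=\log r$ and $t_{2}=\log s$ yields \eqref{conj-ineq-1} and \eqref{conj-ineq-2}, because $r,s,\sqrt{rs}\in(0,1)$ lie in the domains $\bigl(0,\frac{\pi_{p,q}}2\bigr)$ and $(0,m^{*}_{p,q})$ (note $\frac{\pi_{p,q}}2>1$ and $m^{*}_{p,q}>1$). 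So it suffices to establish the two monotonicities.

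First I would differentiate the inverse relations $F_{p,q}(\sin_{p,q}x)=x$ and $\arcsinh_{p,q}(\sinh_{p,q}x)=x$ to obtain $\sin_{p,q}'(x)=(1-\sin_{p,q}^{q}x)^{1/p}$ and $\sinh_{p,q}'(x)=(1+\sinh_{p,q}^{q}x)^{1/p}$. Writing $s=\sin_{p,q}x$ (so $x=\arcsin_{p,q}s$) and $\sigma=\sinh_{p,q}x$ (so $x=\arcsinh_{p,q}\sigma$) gives
\begin{equation*}
\frac{x\sin_{p,q}'(x)}{\sin_{p,q}(x)}=\frac{\arcsin_{p,q}(s)\,(1-s^{q})^{1/p}}{s}=:\Phi(s),\qquad \frac{x\sinh_{p,q}'(x)}{\sinh_{p,q}(x)}=\frac{\arcsinh_{p,q}(\sigma)\,(1+\sigma^{q})^{1/p}}{\sigma}=:\Psi(\sigma).
\end{equation*}
Since $s$ and $\sigma$ increase with $x$, the criterion reduces to showing that $\Phi$ is decreasing on $(0,1)$ and that $\Psi$ is increasing on $(0,\infty)$.

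For $\Phi$, logarithmic differentiation shows that $\Phi'(s)\le0$ is equivalent, after clearing the positive denominators, to
\begin{equation*}
\arcsin_{p,q}(s)\ge R(s):=\frac{p\,s\,(1-s^{q})^{1-1/p}}{p+(q-p)s^{q}},\qquad s\in(0,1),
\end{equation*}
where $p+(q-p)s^{q}$ equals $p$ at $s=0$, equals $q$ at $s=1$, and is monotone between, hence positive. Since $R(0)=0=\arcsin_{p,q}(0)$, it is enough to prove $R'(s)\le(1-s^{q})^{-1/p}=\arcsin_{p,q}'(s)$ and then integrate. Carrying out the differentiation of $R$ and clearing denominators, this last inequality collapses to $-p\le q-p$, i.e.\ to the trivially true $q>0$. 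The mirror computation for $\Psi$ produces $\arcsinh_{p,q}(\sigma)\le\tilde R(\sigma):=\frac{p\,\sigma\,(1+\sigma^{q})^{1-1/p}}{p+(p-q)\sigma^{q}}$, whose derivative form $\tilde R'(\sigma)\ge(1+\sigma^{q})^{-1/p}$ again reduces to $q>0$. (Equivalently, one may route both steps through the monotone form of L'Hospital's rule applied to $\arcsin_{p,q}(s)\big/\bigl(s(1-s^{q})^{-1/p}\bigr)$ and $\arcsinh_{p,q}(\sigma)\big/\bigl(\sigma(1+\sigma^{q})^{-1/p}\bigr)$, whose derivative ratios $\frac{p(1-s^{q})}{p+(q-p)s^{q}}$ and $\frac{p(1+\sigma^{q})}{p+(p-q)\sigma^{q}}$ are respectively decreasing and increasing.)

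The main obstacle is a sign issue in the hyperbolic case when $p<q$: then the denominator $p+(p-q)\sigma^{q}$ vanishes at $\sigma_{0}=\bigl(p/(q-p)\bigr)^{1/q}$ and turns negative, so the auxiliary function $\tilde R$ and the integration argument only cover $(0,\sigma_{0})$, giving $\Psi'\ge0$ there. For $\sigma>\sigma_{0}$ I would argue directly: there $\Psi'(\sigma)\ge0$ is equivalent to $p\,\sigma\,(1+\sigma^{q})^{1-1/p}\ge\bigl[p+(p-q)\sigma^{q}\bigr]\arcsinh_{p,q}(\sigma)$, whose right-hand side is negative while the left-hand side is positive, so it holds trivially; since $\Psi$ is itself smooth on all of $(0,\infty)$, continuity glues the two ranges and shows that $\Psi$ is increasing throughout. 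With both monotonicities in hand, $\sin_{p,q}$ is geometrically concave and $\sinh_{p,q}$ is geometrically convex, which are precisely \eqref{conj-ineq-1} and \eqref{conj-ineq-2}.
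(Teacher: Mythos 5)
Your proof is correct, but it takes a genuinely different technical route from the paper's. Both arguments start identically: the two inequalities are exactly the $\lambda=\frac12$ cases of geometric concavity of $\sin_{p,q}$ and geometric convexity of $\sinh_{p,q}$, and both ultimately rest on the differential criterion that $xf'(x)/f(x)$ be monotone (Lemma~\ref{lem2.1}). The difference is where that criterion is applied. The paper applies it only to the integrands $(1-t^q)^{-1/p}$ and $(1+t^q)^{-1/p}$, where the check is a one-line computation, and then transfers the conclusion to $\arcsin_{p,q}$, $\arcsinh_{p,q}$ and finally to their inverses by invoking two structural lemmas from the literature --- that integrals of geometrically convex (resp.\ concave) functions inherit the property (Lemma~\ref{lem2.3}) and that inversion of an increasing function swaps geometric convexity and concavity (Lemma~\ref{lem2.4}). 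You instead apply the criterion directly to $\sin_{p,q}$ and $\sinh_{p,q}$, pull the quotient back through the inverse substitution to get $\Phi$ and $\Psi$, and prove their monotonicity by hand via the auxiliary functions $R$ and $\tilde R$. I verified the key derivative inequalities: after clearing denominators the difference of the two sides equals $pq^{2}s^{q}\ge0$ (resp.\ $pq^{2}\sigma^{q}\ge0$), so your reduction to a trivially true statement is sound even though the exact reduced form you quote ($-p\le q-p$) is a rescaled variant of this. What the paper's route buys is brevity, at the cost of leaning on the two cited lemmas and of dispatching the hyperbolic case with ``the rest may be proved similarly''; what your route buys is a self-contained, fully checkable argument that in particular surfaces and correctly resolves the sign change of $p+(p-q)\sigma^{q}$ when $p<q$ --- a subtlety the paper's approach never has to confront --- and your parenthetical domain claims $\frac{\pi_{p,q}}2>1$ and $m^{*}_{p,q}>1$ are indeed true, so $r$, $s$, $\sqrt{rs}$ all lie where the functions are defined.
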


The main purpose of this paper is to discover the geometric convexity of $\sin_{p,q}(x)$ and $\sinh_{p,q}(x)$ and to give an affirmative answer to the above stated~\cite[p.~1421, Conjecture~2.11]{Jiang-Qi-conj2}.
\par
Our main results may be formulated in the following theorem.

\begin{thm}\label{th1.1}
Let $p,q\in(1,\infty)$ and $r,s,x\in(0,1)$. Then
\begin{enumerate}
\item
the function $\sin_{p,q}(x)$ is geometrically concave function;
\item
the function $\sinh_{p,q}(x)$ is geometrically convex function.
\end{enumerate}
\end{thm}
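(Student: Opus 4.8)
The plan is to rely on the standard criterion for geometric (multiplicative) convexity: a positive twice-differentiable function $f$ on a subinterval of $(0,\infty)$ is geometrically convex, respectively concave, if and only if $t\mapsto\log f(e^t)$ is convex, respectively concave, equivalently if and only if $x\mapsto xf'(x)/f(x)$ is increasing, respectively decreasing. A short computation turns this into the sign condition that $xf(x)f''(x)-x[f'(x)]^2+f(x)f'(x)$ be nonnegative (for convexity) or nonpositive (for concavity). The first task is therefore to obtain usable derivative formulas. Differentiating $\arcsin_{p,q}(\sin_{p,q}x)=x$ and $\arcsinh_{p,q}(\sinh_{p,q}x)=x$ through the inverse-function rule gives $\sin_{p,q}'(x)=\bigl(1-(\sin_{p,q}x)^q\bigr)^{1/p}$ and $\sinh_{p,q}'(x)=\bigl(1+(\sinh_{p,q}x)^q\bigr)^{1/p}$, and one more differentiation produces the second derivatives in closed form.

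For part (1) I would set $s=\sin_{p,q}x$, so that $x=\int_0^s(1-t^q)^{-1/p}\,\td t$, insert the derivative formulas into the sign quantity above, and cancel the common positive factor $(1-s^q)^{1/p}$. This reduces geometric concavity to the one-variable inequality
\begin{equation*}
G(s):=\bigl[p+(q-p)s^q\bigr]\int_0^s(1-t^q)^{-1/p}\,\td t-ps(1-s^q)^{1-1/p}\ge0,\qquad s\in(0,1).
\end{equation*}
Since $G(0)=0$, it is enough to check $G'(s)\ge0$. The useful point is that upon differentiating, the elementary terms cancel almost completely and leave
\begin{equation*}
G'(s)=qs^{q-1}\Bigl[(q-p)\int_0^s(1-t^q)^{-1/p}\,\td t+ps(1-s^q)^{-1/p}\Bigr].
\end{equation*}
When $q\ge p$ this is plainly nonnegative. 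When $q<p$ the bracket stays positive because the integrand $(1-t^q)^{-1/p}$ is increasing, so $\int_0^s(1-t^q)^{-1/p}\,\td t\le s(1-s^q)^{-1/p}$; substituting this bound collapses the bracket to the nonnegative quantity $qs(1-s^q)^{-1/p}$.

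Part (2) runs along the same lines with $v=\sinh_{p,q}x$ and $x=\int_0^v(1+t^q)^{-1/p}\,\td t$, reducing geometric convexity to $G_h(v)\ge0$ where
\begin{equation*}
G_h(v):=v(1+v^q)^{1-1/p}-\Bigl[1-\tfrac{q-p}{p}v^q\Bigr]\int_0^v(1+t^q)^{-1/p}\,\td t,
\end{equation*}
again with $G_h(0)=0$ and the analogous simplification
\begin{equation*}
G_h'(v)=qv^{q-1}\Bigl[v(1+v^q)^{-1/p}+\tfrac{q-p}{p}\int_0^v(1+t^q)^{-1/p}\,\td t\Bigr].
\end{equation*}
The case $q\ge p$ is immediate, and I expect the case $q<p$ to be the main obstacle: here the integrand $(1+t^q)^{-1/p}$ is \emph{decreasing}, so the naive bound $\int_0^v(1+t^q)^{-1/p}\,\td t\le v$ is too weak and can make the bracket negative. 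To get past this I would establish the sharper estimate $\int_0^v(1+t^q)^{-1/p}\,\td t<\frac{pv}{(p-q)(1+v^q)^{1/p}}$ by introducing the auxiliary function $J(v)$ equal to the difference of its two sides; one verifies $J(0)=0$ and that $J'(v)$ simplifies to the strictly positive $\frac{q}{p-q}(1+v^q)^{-1-1/p}$, whence $J>0$. Feeding this estimate into the bracket gives $G_h'(v)>0$ and so $G_h(v)>0$. Finally, specializing the resulting geometric concavity and convexity to the geometric mean $\sqrt{rs}$ (i.e.\ exponent $\tfrac12$) yields exactly \eqref{conj-ineq-1} and \eqref{conj-ineq-2}, settling the conjecture.
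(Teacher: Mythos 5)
Your proof is correct, and it takes a genuinely different route from the paper's. The paper never differentiates $\sin_{p,q}$ or $\sinh_{p,q}$ directly: it applies the criterion ``$tf'(t)/f(t)$ increasing'' to the \emph{integrand} $f(t)=(1-t^q)^{-1/p}$, computing $tf'(t)/f(t)=\frac{q}{p}\cdot\frac{t^q}{1-t^q}$, and then invokes two closure lemmas quoted from the literature --- that an integral $\int_a^x f(t)\td t$ inherits geometric convexity/concavity from $f$, and that the inverse of an increasing geometrically convex function is geometrically concave --- to pass from the integrand to $\arcsin_{p,q}$ and then to $\sin_{p,q}$ (and similarly for the hyperbolic case). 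You instead apply the differential criterion $x\{ff''-(f')^2\}+ff'\ge 0$ (or $\le 0$) directly to $\sin_{p,q}$ and $\sinh_{p,q}$, using $\sin_{p,q}'(x)=\bigl(1-(\sin_{p,q}x)^q\bigr)^{1/p}$ and its hyperbolic analogue, and reduce everything to the explicit one-variable inequalities $G(s)\ge0$ and $G_h(v)\ge0$, settled by the $G(0)=0$, $G'\ge0$ device; I checked the cancellations leading to your expressions for $G'$ and $G_h'$, the bound $\int_0^s(1-t^q)^{-1/p}\td t\le s(1-s^q)^{-1/p}$ in the case $q<p$ of part (1), and the auxiliary computation $J'(v)=\frac{q}{p-q}(1+v^q)^{-1-1/p}$ in part (2), and they are all right. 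Your route is longer but entirely elementary and verifiable line by line; notably, it does not rely on the integral-closure lemma, which the paper states only for geometrically \emph{concave} integrands yet applies to a geometrically \emph{convex} one, so your argument in fact sidesteps a small logical gap in the paper. The paper's route, once the quoted lemmas are granted in their full (convex and concave) form, is much shorter and exposes the structural reason for the result: everything reduces to the monotonicity of $t^q/(1-t^q)$ and $t^q/(1+t^q)$.
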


\begin{cor}\label{cor1.1}
Let $p,q\in(1,\infty)$ and $r,s,x\in(0,1)$. Then the inequalities~\eqref{conj-ineq-1} and~\eqref{conj-ineq-2} and the inequalities
\begin{equation}
\sin_{p}\sqrt{rs}\,\ge \sqrt{\sin_{p}r\sin_{p}s}
\end{equation}
and
\begin{equation}
\sinh_{p}\sqrt{rs}\,\le \sqrt{\sinh_{p}r\sinh_{p}s}
\end{equation}
are all valid.
\end{cor}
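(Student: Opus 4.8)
The plan is to obtain Corollary~\ref{cor1.1} as an immediate specialization of Theorem~\ref{th1.1}. Recall that a positive function $f$ on a subinterval of $(0,\infty)$ is geometrically convex when $f\bigl(x^{\lambda}y^{1-\lambda}\bigr)\le f(x)^{\lambda}f(y)^{1-\lambda}$ for all admissible $x,y$ and every $\lambda\in[0,1]$, and geometrically concave when the reverse inequality holds. Specializing to the geometric midpoint $\lambda=\tfrac12$ turns these two statements into $f\bigl(\sqrt{xy}\,\bigr)\le\sqrt{f(x)f(y)}$ and $f\bigl(\sqrt{xy}\,\bigr)\ge\sqrt{f(x)f(y)}$, respectively. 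I would therefore simply set $x=r$ and $y=s$ with $r,s\in(0,1)$.

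Concretely, since Theorem~\ref{th1.1}(1) says that $\sin_{p,q}$ is geometrically concave, the choice $\lambda=\tfrac12$ yields $\sin_{p,q}\sqrt{rs}\ge\sqrt{\sin_{p,q}r\,\sin_{p,q}s}$, which is precisely~\eqref{conj-ineq-1}; and since Theorem~\ref{th1.1}(2) says that $\sinh_{p,q}$ is geometrically convex, the same choice gives $\sinh_{p,q}\sqrt{rs}\le\sqrt{\sinh_{p,q}r\,\sinh_{p,q}s}$, which is~\eqref{conj-ineq-2}. This settles the conjecture. The two remaining inequalities, for the $p$-functions, follow at once by putting $q=p$: as recalled in the introduction, when $p=q$ the functions $\sin_{p,q}$ and $\sinh_{p,q}$ reduce to $\sin_{p}$ and $\sinh_{p}$, so \eqref{conj-ineq-1} and \eqref{conj-ineq-2} specialize to $\sin_{p}\sqrt{rs}\ge\sqrt{\sin_{p}r\,\sin_{p}s}$ and $\sinh_{p}\sqrt{rs}\le\sqrt{\sinh_{p}r\,\sinh_{p}s}$.

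Thus the whole weight of the corollary rests on Theorem~\ref{th1.1}, which is where any genuine work lies. To establish it I would use the criterion that $f$ is geometrically concave (convex) exactly when $t\mapsto\log f(e^{t})$ is concave (convex), equivalently when $x\mapsto xf'(x)/f(x)$ is decreasing (increasing). For $f=\sin_{p,q}$ the inverse-function relation gives $f'(x)=\bigl(1-f(x)^{q}\bigr)^{1/p}$, so with $y=\sin_{p,q}(x)$ the quantity $xf'(x)/f(x)$ equals $F(y)/g(y)$, where $F=\arcsin_{p,q}$ and $g(y)=y(1-y^{q})^{-1/p}$; applying the monotone form of L'Hospital's rule reduces the required monotonicity to that of $\frac{F'(y)}{g'(y)}=\frac{1-y^{q}}{1-\frac{p-q}{p}y^{q}}$, which is elementary. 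The hyperbolic case is entirely parallel, with $\frac{F'(y)}{g'(y)}=\frac{1+y^{q}}{1+\frac{p-q}{p}y^{q}}$. The one delicate point, and the main obstacle, is that for $q>p$ the denominator $1+\frac{p-q}{p}y^{q}$ may change sign, so one must separately confirm that the geometric-convexity inequality still holds—in fact trivially—on the region where it is negative.
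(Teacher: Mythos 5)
Your derivation of the corollary is correct and matches the paper's: the paper likewise obtains it directly from Theorem~\ref{th1.1} and Definition~\ref{dfn1.1}, i.e.\ by taking $\lambda=\tfrac12$ in~\eqref{eq2.1} (and setting $q=p$ for the two $p$-function inequalities). Your appended sketch of an independent proof of Theorem~\ref{th1.1} via the monotone form of L'Hospital's rule goes beyond what the corollary requires and differs from the paper's lemma-based route, but the corollary itself only uses the theorem as stated, so the argument stands.
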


\section{A definition and lemmas}

For proving our main results, we need the following definition and lemmas.

\begin{dfn}\label{dfn1.1}
Let $I\subseteq (0,+\infty)$ be an interval and $f:I\to (0,\infty)$ be a continuous function. This function $f$ is said to be geometrically convex on $I$ if
\begin{equation}\label{eq2.1}
  f\bigl(x^\lambda y^{1-\lambda}\bigr)\le  f^\lambda(x) f^{1-\lambda}(y).
\end{equation}
is valid for all $x, y\in I$ and $\lambda\in[0,1]$; If the inequality~\eqref{eq2.1} is reversed, then the function $f(x)$ is said to be geometrically concave on $I$.
\end{dfn}

The notion of geometric convexity was introduced in~\cite{Jiang-Qi-conj10}. For further development on convexity of functions, please refer to~\cite{Jiang-Qi-conj11}.

\begin{lem}[{\cite[Proposition~4.3]{Jiang-Qi-conj11}}]\label{lem2.1}
Let $f:I\subset(0,\infty)\to (0,\infty)$ be a twice differentiable function. The following assertions are equivalent:
\begin{enumerate}
\item
The function $f$ is geometrically convex;
\item
The function$\frac{xf'(x)}{f(x)}$ is increasing;
\item
The function $f$ is geometrically convex if and only if
\begin{equation}\label{eq2.2}
x\bigl\{f(x)f''(x)-[f'(x)]^2\bigr\}+f(x)f'(x)\ge 0
\end{equation}
holds for all $x\in I$.
\end{enumerate}
\end{lem}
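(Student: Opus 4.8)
The plan is to reduce geometric convexity to ordinary (Jensen) convexity via the standard logarithmic substitution, and then to read off assertions~(2) and~(3) as the usual first- and second-derivative tests. Concretely, for $f:I\to(0,\infty)$ I would introduce the auxiliary function $g(u)=\ln f(e^u)$ on the set $\{u:e^u\in I\}$. Taking logarithms of the defining inequality~\eqref{eq2.1} and writing $x=e^a$, $y=e^b$ transforms $f\bigl(x^\lambda y^{1-\lambda}\bigr)\le f^\lambda(x)f^{1-\lambda}(y)$ into
$$
g\bigl(\lambda a+(1-\lambda)b\bigr)\le \lambda g(a)+(1-\lambda)g(b),
$$
which is precisely the statement that $g$ is convex in the ordinary sense. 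Hence assertion~(1) is equivalent to the ordinary convexity of $g$, and the entire lemma reduces to classical convexity criteria applied to $g$.

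For the equivalence of (1) and (2), I would differentiate once. With $x=e^u$ the chain rule gives
$$
g'(u)=\frac{e^u f'(e^u)}{f(e^u)}=\frac{xf'(x)}{f(x)}.
$$
Since $u\mapsto e^u$ is an increasing bijection of $\mathbb{R}$ onto $(0,\infty)$, the function $g'$ is nondecreasing in $u$ exactly when $\frac{xf'(x)}{f(x)}$ is nondecreasing in $x$. As a differentiable $g$ is convex if and only if $g'$ is nondecreasing, this yields (1)$\Leftrightarrow$(2) at once.

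For the equivalence with (3), I would differentiate a second time. Writing $h(x)=\frac{xf'(x)}{f(x)}$, the quotient rule gives
$$
h'(x)=\frac{x\bigl\{f(x)f''(x)-[f'(x)]^2\bigr\}+f(x)f'(x)}{[f(x)]^2},
$$
and then $g''(u)=x\,h'(x)$ because $\frac{\td x}{\td u}=e^u=x$. Since $x>0$ and $[f(x)]^2>0$, the sign of $g''(u)$ agrees with the sign of the numerator of $h'(x)$, so $g''\ge0$ throughout $\{u:e^u\in I\}$ is equivalent to the inequality~\eqref{eq2.2} holding on $I$. Combining this with the standard fact that a twice differentiable function is convex precisely when its second derivative is nonnegative closes the chain of equivalences (1)$\Leftrightarrow$(2)$\Leftrightarrow$(3).

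The argument is essentially bookkeeping once the substitution $g(u)=\ln f(e^u)$ is in place; there is no genuine obstacle. The only point demanding care is the second differentiation, where one must retain the extra factor $\frac{\td x}{\td u}=x$ so that $g''$ is correctly identified with $x\,h'(x)$ rather than with $h'(x)$ alone. I expect this chain-rule bookkeeping to be the single place where a sign or factor could be mishandled.
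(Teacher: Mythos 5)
Your proof is correct: the substitution $g(u)=\ln f(e^u)$ reduces geometric convexity to ordinary convexity, and your chain-rule computations $g'(u)=\frac{xf'(x)}{f(x)}$ and $g''(u)=x\,h'(x)$ (with the extra factor $x=\frac{\td x}{\td u}$ correctly retained) yield exactly the equivalences (1)$\Leftrightarrow$(2)$\Leftrightarrow$(3). The paper itself gives no proof of this lemma --- it is quoted verbatim from \cite[Proposition~4.3]{Jiang-Qi-conj11} --- and your argument is the standard one underlying that cited result, so there is nothing in the paper to diverge from; your reading of item~(3) as simply the inequality~\eqref{eq2.2} (despite its garbled ``if and only if'' phrasing in the statement) is the intended one.
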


\begin{lem}[\cite{Jiang-Qi-conj12}]\label{lem2.3}
Let $f:(a,b)\subseteq(0,\infty)\to (0,\infty)$ be a geometrically concave function. Then
$$
g(x)=\int_{x}^b f(t)\td t \quad \text{and}\quad h(x)=\int_{a}^x f(t)\td t
$$
are also geometrically concave on $(a,b)$.
\end{lem}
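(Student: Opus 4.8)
The plan is to reduce the statement to the second-order criterion of Lemma~\ref{lem2.1}, read in its reversed (concave) form, and to run a single monotonicity argument twice. Writing $\mu(x)=\frac{x f'(x)}{f(x)}$, the geometric concavity of $f$ is exactly the assertion that $\mu$ is \emph{decreasing} on $(a,b)$ (the concave analogue of assertion~(2) of Lemma~\ref{lem2.1}). I would assume $f$ to be twice differentiable so that this criterion applies verbatim, and record the one identity on which everything turns,
\[
\frac{\td}{\td t}\bigl(t f(t)\bigr)=f(t)+t f'(t)=f(t)\bigl(1+\mu(t)\bigr).
\]

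For $h(x)=\int_a^x f(t)\,\td t$ we have $h'=f$ and $h''=f'$, so the reversed form of criterion~\eqref{eq2.2}, namely $x\{h h''-(h')^2\}+h h'\le0$, becomes $x\{h f'-f^2\}+hf\le0$; dividing by $f>0$ and using $x f'=\mu f$ this collapses to the clean inequality $\bigl(1+\mu(x)\bigr)h(x)\le x f(x)$. To prove it I would integrate the displayed identity over $(a,x)$ and use the monotonicity of $\mu$: for $t\in(a,x)$ we have $\mu(t)\ge\mu(x)$ and $f(t)>0$, so $f(t)\bigl(1+\mu(t)\bigr)\ge f(t)\bigl(1+\mu(x)\bigr)$, whence
\[
x f(x)-\lim_{t\to a^+}t f(t)=\int_a^x f(t)\bigl(1+\mu(t)\bigr)\,\td t\ge\bigl(1+\mu(x)\bigr)h(x),
\]
and the required inequality follows since the boundary term is nonnegative. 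The case $g(x)=\int_x^b f(t)\,\td t$ is the mirror image: now $g'=-f$, $g''=-f'$, the reversed criterion reduces to $\bigl(1+\mu(x)\bigr)g(x)+x f(x)\ge0$, and integrating the same identity over $(x,b)$—where now $\mu(t)\le\mu(x)$—gives $\bigl(1+\mu(x)\bigr)g(x)\ge\lim_{t\to b^-}t f(t)-x f(x)\ge-x f(x)$, again using positivity of $t f(t)$.

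The algebra above is routine; the genuinely structural point is the treatment of the boundary term $t f(t)$, and I expect this—not the main inequality—to be the only real obstacle. Both arguments need $\lim_{t\to a^+}t f(t)$ and $\lim_{t\to b^-}t f(t)$ to be finite and nonnegative. Nonnegativity is immediate, and finiteness is exactly where geometric concavity is used once more: setting $u=\log t$, the concavity of $f$ says that $\log f(e^u)$, hence also $\psi(u)=\log\bigl(e^u f(e^u)\bigr)=\log\bigl(t f(t)\bigr)$, is concave, and the monotonicity of the one-sided derivative of a concave function forbids $\psi\to+\infty$ at a finite endpoint, while at an endpoint $a=0$ or $b=\infty$ the assumed finiteness of $h$ or $g$ forces $t f(t)\to0$; in every case $t f(t)$ stays bounded where it matters. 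A secondary point is the gap between the hypothesis (mere continuity of $f$) and Lemma~\ref{lem2.1} (twice differentiability): since the proof touches $f$ only through the decreasing ratio $\mu$ and the absolutely continuous map $t f(t)$, I would close it by running the argument with the one-sided derivatives of the concave function $\log f(e^u)$, or by approximating $f$ by smooth geometrically concave functions.
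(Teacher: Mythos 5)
The paper itself contains no proof of this lemma: it is imported wholesale from the reference \cite{Jiang-Qi-conj12} of Zhang and Chu, so there is nothing in the source to compare your argument against step by step, and your proof has to stand on its own. Taken on its own terms it is correct. The reduction of the reversed (concave) form of criterion~\eqref{eq2.2} to $(1+\mu(x))h(x)\le xf(x)$ for $h$ and to $(1+\mu(x))g(x)+xf(x)\ge0$ for $g$ is right, and the comparison of $\int f(t)(1+\mu(t))\,\td t$ with $(1+\mu(x))\int f(t)\,\td t$ via the monotonicity of $\mu$ is exactly the point where geometric concavity of $f$ enters; I have checked both sign computations and they are sound. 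Two remarks. First, the boundary analysis is more elaborate than it needs to be: integrating the identity over $(\epsilon,x)$ instead of $(a,x)$ gives $xf(x)\ge \epsilon f(\epsilon)+(1+\mu(x))\int_\epsilon^x f(t)\,\td t\ge (1+\mu(x))\int_\epsilon^x f(t)\,\td t$ for every $\epsilon\in(a,x)$, and letting $\epsilon\to a^+$ finishes the $h$ case using only the trivial fact $\epsilon f(\epsilon)\ge0$; the mirror statement disposes of $g$. So neither the existence nor the finiteness of $\lim_{t\to a^+}tf(t)$ and $\lim_{t\to b^-}tf(t)$ is actually needed, although your argument for finiteness via the concavity of $\log\bigl(tf(t)\bigr)$ in the logarithmic variable is itself correct. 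Second, the mismatch you flag between the lemma's hypothesis (mere continuity, per Definition~\ref{dfn1.1}) and the differentiability demanded by Lemma~\ref{lem2.1} is a genuine gap, and either of your proposed repairs (one-sided derivatives of the concave function $u\mapsto\log f(e^u)$, or smoothing) closes it; in the paper's application the integrand $(1-t^q)^{-1/p}$ is smooth, so nothing is lost there.
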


\begin{lem}[\cite{Jiang-Qi-conj13}]\label{lem2.4}
Let $f:(a,b)\subseteq(0,\infty)\to (0,\infty)$ be monotonic and $f^{-1}$ be the inverse of $f$. Then
\begin{enumerate}
\item
if $f$ is increasing and geometrically convex \textup{(}or geometrically concave respectively\textup{)}, then $f^{-1}$ is geometrically concave \textup{(}or geometrically concave respectively\textup{)};
\item
if $f$ is decreasing and geometrically convex \textup{(}or geometrically concave respectively\textup{)}, then $f^{-1}$ is geometrically convex \textup{(}or geometrically concave respectively\textup{)}.
\end{enumerate}
\end{lem}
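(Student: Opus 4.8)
The plan is to reduce geometric convexity to ordinary (arithmetic) convexity by the logarithmic change of variables, and then to invoke the classical behaviour of convexity under functional inversion. For a positive function $h$ on a subinterval of $(0,\infty)$ I would set $\widetilde h(u)=\log h(e^u)$. Substituting $x=e^u$ and $y=e^v$ into Definition~\ref{dfn1.1} shows at once that $f$ is geometrically convex (respectively concave) on $(a,b)$ if and only if $\widetilde f$ is convex (respectively concave) on $(\log a,\log b)$, since the defining inequality $f\bigl(x^\lambda y^{1-\lambda}\bigr)\le f^\lambda(x)f^{1-\lambda}(y)$ is, after taking logarithms, exactly $\widetilde f(\lambda u+(1-\lambda)v)\le\lambda\widetilde f(u)+(1-\lambda)\widetilde f(v)$. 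Moreover $\widetilde f$ is increasing or decreasing precisely when $f$ is, because $\log$ and $\exp$ are increasing.

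The key algebraic observation is that this transform intertwines inversion: if $g=f^{-1}$, then $\widetilde g=(\widetilde f)^{-1}$. Indeed, $\widetilde f(u)=w$ means $f(e^u)=e^w$, that is $e^u=g(e^w)$, that is $u=\log g(e^w)=\widetilde g(w)$. Hence the whole problem is transported to the classical question: given a monotone convex or concave function $F:=\widetilde f$, what is the convexity type of $F^{-1}=\widetilde g$?

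I would then establish the standard dictionary for inverses of monotone convex/concave functions directly from the definition of ordinary convexity. Writing $G=F^{-1}$ and putting $a=F(p)$, $b=F(q)$, convexity of $F$ gives $\lambda a+(1-\lambda)b=\lambda F(p)+(1-\lambda)F(q)\ge F(\lambda p+(1-\lambda)q)$; applying the \emph{increasing} map $G$ preserves this inequality and yields concavity of $G$, while applying the \emph{decreasing} map $G$ reverses it and yields convexity of $G$. The same argument with the reversed inequality (concavity of $F$) shows that an increasing concave $F$ has an increasing convex inverse, and that a decreasing concave $F$ has a decreasing concave inverse. Translating these four outcomes back through $\widetilde{(\cdot)}$ via the first paragraph yields precisely the four assertions of the lemma: for increasing $f$ the convexity type is swapped, and for decreasing $f$ it is preserved.

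The main obstacle is the bookkeeping of inequality directions in the decreasing case: since a decreasing $G$ is order-reversing, applying it to a convexity inequality reverses the sense, and one must check that this reversal is exactly what makes decreasing-convex map to decreasing-convex rather than to concave. A clean alternative that sidesteps the reduction — under the extra hypothesis that $f$ is twice differentiable — is to use the characterization in Lemma~\ref{lem2.1}: with $y=f(x)$ and $g=f^{-1}$ one computes $\dfrac{y\,g'(y)}{g(y)}=\dfrac{f(x)}{x\,f'(x)}$, the reciprocal of $\dfrac{x\,f'(x)}{f(x)}$, and then reads off the monotonicity of this reciprocal from the monotonicity of $g$, the monotonicity of $\dfrac{x\,f'(x)}{f(x)}$, and its sign (positive when $f$ increases, negative when $f$ decreases), recalling that $t\mapsto 1/t$ is decreasing on each of $(0,\infty)$ and $(-\infty,0)$.
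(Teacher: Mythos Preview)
The paper does not supply a proof of Lemma~\ref{lem2.4}; it is quoted from the reference~\cite{Jiang-Qi-conj13} without argument. So there is no proof in the paper to compare against, and your proposal stands on its own.

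Your argument is correct. The logarithmic substitution $\widetilde h(u)=\log h(e^u)$ is the canonical way to reduce geometric convexity to ordinary convexity, and your verification that $\widetilde{f^{-1}}=(\widetilde f)^{-1}$ is clean. The four-case dictionary you derive for ordinary convexity under inversion (increasing swaps convex/concave, decreasing preserves them) is standard and your derivation from the definition is sound. The alternative via Lemma~\ref{lem2.1} is also valid and perhaps closer in spirit to how the paper itself uses these tools; your computation $\dfrac{y\,g'(y)}{g(y)}=\Bigl(\dfrac{x\,f'(x)}{f(x)}\Bigr)^{-1}$ with $x=g(y)$ is correct, and the monotonicity bookkeeping goes through as you describe.

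One remark worth making explicit: the statement of Lemma~\ref{lem2.4} as printed contains an evident typo in item~(1), where the second parenthetical should read ``geometrically \emph{convex}'' rather than ``geometrically concave'' (an increasing geometrically concave $f$ has a geometrically convex inverse, as your argument correctly shows and as the paper's own application in the proof of Theorem~\ref{th1.1} requires). Your proof yields the corrected assertion, which is what is actually used.
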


\section{Proof of Theorem~\ref{th1.1}}

We are now in a position to prove our main results.

\begin{proof}[Proof of Theorem~\ref{th1.1}]
Let $f(t)=(1-t^q)^{-1/p}$. Then an easy computation gives
\begin{equation}\label{eq3.1}
  \frac{tf'(t)}{f(t)}=\frac{q}{p}\cdot\frac{t^q}{1-t^q}
\end{equation}
and
\begin{equation}\label{eq3.2}
  \biggl(\frac{t^q}{1-t^q}\biggr)'=\frac{qt^{q-1}}{(1-t^q)^2}.
\end{equation}
From~\eqref{eq3.2} we see that the function $\frac{tf'(t)}{f(t)}$ is increasing. By Lemma~\ref{lem2.1}, we deduce that the function $f(t)$ is geometrically convex. By Lemma~\ref{lem2.3}, we find that the funciton $\arcsin_{p,q}(x)$ is geometrically convex.
\par
On the other hand, let $h(x)=\arcsin_{p,q}(x)$ for $x\in(0,1)$. Then
\begin{equation*}
h'(x)=(1-x^q)^{-1/p}
\end{equation*}
which is increasing. By Lemma~\ref{lem2.4}, we derive that the function $\sin_{p,q}(x)$ is geometrically concave.
\par
The rest may be proved similarly.
\end{proof}

\begin{proof}[Proof of Corollary~\ref{cor1.1}]
This follows from Theorem~\ref{th1.1} and Definition~\ref{dfn1.1}.
\end{proof}

\end{document}